\definecolor{cite}{rgb}{0.50,0.00,1.00}
\definecolor{url}{rgb}{0.00,0.50,0.75}
\definecolor{link}{rgb}{0.00,0.00,0.50}
\theoremstyle{definition} %标题与编号为黑体, 正文为正常字体
\newtheorem{Unity}{Unity}[section] %\newtheorem{定理环境名}{标题}[主计数器名]
\newtheorem*{defn*}{Definition} %\newtheorem*{定理环境名}[已定义定理环境名]{标题} 手动编号, 不自动编号
\newtheorem{defn}[Unity]{Definition} %\newtheorem{定理环境名}[已定义定理环境名]{标题} 与当前环境共用同一个序号计数器
\theoremstyle{plain} %标题与编号为黑体, 正文为斜体
\newtheorem*{thm*}{Theorem}
\newtheorem{thm}[Unity]{Theorem}
\newtheorem{prop}[Unity]{Proposition}
\newtheorem*{cor*}{Corollary}
\newtheorem{cor}[Unity]{Corollary}
\newtheorem{lem}[Unity]{Lemma}
\newtheorem{conj}[Unity]{Conjecture}
\theoremstyle{remark} %标题与编号为斜体, 正文为正常字体
\newtheorem*{rmk*}{Remark}
\numberwithin{Unity}{section}%\numberwithin{计数器}{主计数器}
\begin{document}
\title{exterior power of stable vector bundle destabilized by Frobenius pull-back}
\author[Yongming Zhang]{Yongming Zhang}
\email{zhangym97@mail.sysu.edu.cn}
\address{School of Science, Sun Yat-sen University, Shenzhen, 518107, P. R. of China}
\maketitle
\begin{abstract}
 In this paper, we prove that for any smooth projective curve $C$ of genus $g\geq2$ over an algebraically closed field of positive characteristic, there exists a stable vector bundle over $C$ whose exterior power is not semi-stable.
\end{abstract}

\section{Introduction}
Let $C$ be a smooth irreducible projective curve of genus $g>0$ over an algebraically closed field $k$.
As we know, when ${\rm char} (k)=0$, the semi-stability of vector bundles over $C$ is preserved under the operations of  tensor product, symmetric product and exterior product operations (cf.  \cite{H71,G79}).
However, this is not the case in positive characteristic. In fact, in \cite{G73} the author proves that, for any prime $p$ and integer $g>1$, there exists a smooth curve $C$ of genus $g$ in characteristic $p$ and a semi-stable bundle $E$ of rank $2$ over $C$ such that the Frobenius pullback $E^{(p)}:=F^*E$ and thus the symmetric power $S^p(E)$ are not semi-stable. The examples of non-ample semi-stable bundles constructed by J.-P. Serre (cf. \cite[Sect. 3]{H71}) and generally by H. Tango (cf. \cite[Sect. 5, Exam. 3]{T72}), as well as the push-forward $F_*E$ of a vector bundle $E$, possess the required property. For the case of the exterior product, we found no relevant assertions; therefore, in this note, we present a similar conclusion regarding the exterior power of semi-stable bundles in positive characteristic.

First, we adopt a computational approach to obtain the following result.
\begin{thm}(Theorem \ref{unstablity})
Let $C$ be a smooth projective curve of genus $g\geq2$ over an algebraically closed field $k$ of characteristic $p>0$, and $E$ be a vector bundle of rank $r$ on $C$. Then $F^n_*E\wedge F^n_*E$ is not semi-stable whenever $r>1$, $p>3$ or $n>1$.
\end{thm}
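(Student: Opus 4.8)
The plan is to argue by contradiction. Write $V:=F^n_*E$ and suppose $\wedge^2 V$ is semistable; I will contradict the bound, due to Sun (and Shepherd--Barron in the first step), on the instability of a Frobenius pull-back of a semistable bundle: if $G$ is semistable on $C$ then
\[
\mu_{\max}(F^{n*}G)-\mu_{\min}(F^{n*}G)\le (p^n-1)(2g-2).
\]
The mechanism that produces a large spread on the other side is the \emph{canonical filtration}. Recall (Joshi--Ramanan--Xia--Yu, Sun) that $F^{n*}V=F^{n*}F^n_*E$ carries a filtration by subbundles
\[
0=V_{p^n}\subset V_{p^n-1}\subset\cdots\subset V_1\subset V_0=F^{n*}V,\qquad V_i/V_{i+1}\cong E\otimes(\Omega^1_C)^{\otimes i}\ (0\le i\le p^n-1),
\]
so that $V_{p^n-1}\cong E\otimes(\Omega^1_C)^{\otimes(p^n-1)}$ is a subbundle and $E$ is a quotient of $F^{n*}V$.

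Since $F^{n*}(\wedge^2 V)=\wedge^2(F^{n*}V)$, the filtration above induces one on $F^{n*}(\wedge^2 V)$ whose graded pieces are $\wedge^2(V_i/V_{i+1})$ and $(V_i/V_{i+1})\otimes(V_j/V_{j+1})$ for $i<j$, each a twist of $\wedge^2 E$ or $E\otimes E$ by a power of $\Omega^1_C$; as $\deg\Omega^1_C=2g-2>0$, the slope of a piece increases with its total twist. I extract the two extreme pieces, using that $\wedge^2$ of a subbundle is a subsheaf and $\wedge^2$ of a quotient is a quotient. For $r\ge 2$ the subbundle $\wedge^2 V_{p^n-1}\cong\wedge^2 E\otimes(\Omega^1_C)^{\otimes 2(p^n-1)}$ gives $\mu_{\max}(F^{n*}(\wedge^2 V))\ge 2\mu(E)+2(p^n-1)(2g-2)$, while the quotient $\wedge^2 E$ (coming from the quotient $E$ of $F^{n*}V$) gives $\mu_{\min}(F^{n*}(\wedge^2 V))\le 2\mu(E)$, whence
\[
\mu_{\max}(F^{n*}(\wedge^2 V))-\mu_{\min}(F^{n*}(\wedge^2 V))\ge 2(p^n-1)(2g-2).
\]

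For $r=1$ all pieces $\wedge^2(V_i/V_{i+1})$ vanish, so I instead use the rank-two subbundle $V_{p^n-2}$ and the rank-two quotient $F^{n*}V/V_2$: their determinants $E^{\otimes 2}\otimes(\Omega^1_C)^{\otimes(2p^n-3)}$ and $E^{\otimes 2}\otimes\Omega^1_C$ are, respectively, a subsheaf and a quotient of $\wedge^2(F^{n*}V)$, yielding spread $\ge 2(p^n-2)(2g-2)$. Comparing with Sun's bound applied to $G=\wedge^2 V$: for $r\ge 2$ we would get $2(p^n-1)(2g-2)\le (p^n-1)(2g-2)$, impossible since $(p^n-1)(2g-2)>0$; for $r=1$ we would get $2(p^n-2)(2g-2)\le (p^n-1)(2g-2)$, i.e. $p^n\le 3$, impossible exactly when $p^n\ge 4$, that is when $p>3$ or $n>1$. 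This reproduces the stated hypothesis (equivalently $\operatorname{rank}V=rp^n\ge 4$), the excluded triples $r=1,\ p\le 3,\ n=1$ being precisely those for which $\wedge^2 V$ has rank $\le 3$ and is automatically semistable.

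The main obstacle is securing the instability estimate in the exact rank-free shape $\mu_{\max}(F^{n*}G)-\mu_{\min}(F^{n*}G)\le (p^n-1)(2g-2)$ for the \emph{iterated} Frobenius $F^n$: the case $n=1$ is Sun/Shepherd--Barron, but the iterate does not follow by naive composition, since the intermediate pull-backs need not be semistable, so one must re-run the canonical-filtration analysis directly for $F^n$. A secondary subtlety, which is exactly what generates the numerical threshold, is the $r=1$ case: there the exterior squares of the rank-one graded pieces vanish, and the spread must be harvested from the two outermost rank-two sub/quotient bundles, producing the condition $p^n\ge 4$.
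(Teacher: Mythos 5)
Your strategy is genuinely different from the paper's: the paper exhibits an explicit destabilizing subsheaf of $F^n_*E\wedge F^n_*E$ (namely $F^n_*(E\wedge E\otimes\Omega_C^{p^n-1})$, resp.\ $F^n_*(E\otimes E\otimes\Omega_C^{p^n-2})$, resp.\ $F^{n-1}_*(B_1\otimes E\otimes\Omega_C^{p^{n-1}-1})$, obtained by isolating the symmetric part of the canonical filtration of $F_*E\otimes F_*E$ in local coordinates) and then compares slopes directly, whereas you argue by contradiction against an upper bound for the instability of the Frobenius pull-back of a semistable bundle. Your extraction of the extreme graded pieces of $\wedge^2(F^{n*}F^n_*E)$ and the resulting lower bounds on the spread ($2(p^n-1)(2g-2)$ for $r\ge2$, and $2(p^n-2)(2g-2)$ for $r=1$) are correct, and for $n=1$ the comparison with the Shepherd--Barron/Sun bound $\mu_{\max}(F^*G)-\mu_{\min}(F^*G)\le(p-1)(2g-2)$ does yield the theorem in the cases $r>1$, $n=1$ and $r=1$, $p>3$, $n=1$.

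However, there is a genuine gap for $n>1$, which you flag yourself but do not close: the inequality $\mu_{\max}(F^{n*}G)-\mu_{\min}(F^{n*}G)\le(p^n-1)(2g-2)$ for semistable $G$ is not a citable statement for $n>1$, and naively iterating the $n=1$ bound through the (possibly unstable) intermediate pull-backs only gives $2(p^n-1)(2g-2)$ --- which exactly fails to contradict your lower bound $2(p^n-1)(2g-2)$ when $r\ge2$. Since for $p\in\{2,3\}$ and $r=1$ the theorem is claimed only via $n>1$, this unproved estimate carries an essential part of the statement. The gap is avoidable within your own framework: writing $F^n_*E=F_*(F^{n-1}_*E)$ and replacing $E$ by $F^{n-1}_*E$, which has rank $rp^{n-1}\ge2$ when $n>1$, reduces every case with $n>1$ to the case $n=1$, $r\ge2$, where the known bound suffices; this is essentially the reduction the paper itself uses in parts (2) and (3) of its key lemma. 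Two smaller points: the canonical filtration of $F^{n*}F^n_*E$ with $p^n$ graded pieces $E\otimes\Omega_C^{\otimes i}$ is likewise only justified in the literature you invoke for $n=1$ (another reason to reduce to $n=1$); and your closing remark that the excluded triples are those where $\wedge^2V$ has rank $\le3$ ``and is automatically semistable'' is incorrect --- only rank one forces semistability, and for $r=1$, $p=3$, $n=1$ the semistability of $\wedge^2F_*L$ is not automatic (in the paper's computation the candidate subsheaf merely has slope equal to that of the whole bundle).
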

Therefore, as a corollary, we derive a similar result in case of exterior power of a stable vector bundle.
\begin{thm}(Corollary \ref{unstablity})
Let $C$ be a smooth projective curve of genus $g\geq2$ defined over an algebraically closed field $k$ of characteristic $p>0$, then there exists a stable vector bundle $E$ on $C$, whose double exterior power is not semi-stable.
\end{thm}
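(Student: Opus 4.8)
The plan is to exhibit the required stable bundle explicitly as a Frobenius direct image and then let Theorem \ref{unstablity} do the destabilizing. The one fact beyond Theorem \ref{unstablity} that the argument needs is the stability of such a direct image, so I would organize the proof into a stability input and a short hypothesis-matching step.

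For the input, I would recall the stability of the Frobenius push-forward: on a smooth projective curve $C$ of genus $g\geq 2$ over an algebraically closed field of characteristic $p>0$, if $W$ is a stable bundle then $F_*W$ is again stable (\redref). Since $C$ has genus $g\geq 2$, stable bundles of any prescribed rank $r\geq 2$ exist, so I would begin by fixing a stable bundle $E$ on $C$ with $r=\operatorname{rank}(E)\geq 2$.

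Now take $n=1$ and consider $F_*E$. By the push-forward stability just recalled, $F_*E$ is stable. On the other hand, because $r>1$, the hypothesis ``$r>1$, $p>3$ or $n>1$'' of Theorem \ref{unstablity} is satisfied regardless of the characteristic, so $F_*E\wedge F_*E$ is not semi-stable. Thus $F_*E$ is a stable bundle whose double exterior power $F_*E\wedge F_*E$ fails to be semi-stable, which is precisely the assertion. (If one prefers to work with line bundles, one can instead take $E=L$ of rank $1$ and iterate: $F^n_*L$ is stable for all $n\geq 1$ by induction on the push-forward stability, and choosing $n=1$ when $p>3$ and $n=2$ when $p\in\{2,3\}$ again meets the hypothesis of Theorem \ref{unstablity}.)

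The step carrying the real content is the stability input, namely that $F_*W$ is stable for $W$ stable on a genus $\geq 2$ curve, with no restriction on $p$. Everything else is bookkeeping: the existence of a higher-rank stable $E$, the single clause $r>1$ triggering Theorem \ref{unstablity}, and, in the line-bundle variant, the elementary induction showing each $F^k_*L$ remains stable. The only point I would double-check is that the push-forward stability theorem is genuinely available in small characteristic, so that the construction is legitimate for every $p>0$.
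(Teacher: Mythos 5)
Your proposal is correct and is essentially the paper's own argument: the paper proves the corollary exactly via your parenthetical line-bundle variant, taking $E=F^n_*L$ for a line bundle $L$ with $n>1$ (so the hypothesis of the instability theorem for $F^n_*E\wedge F^n_*E$ is met for every $p$, including $p=2,3$), with stability of $F^n_*L$ supplied by Lemma \ref{2.4}, which indeed holds in all positive characteristics for $g\geq 2$. Your primary variant --- starting from a stable bundle of rank $\geq 2$ and taking $n=1$ --- also works, but it quietly relies on the existence of higher-rank stable bundles on $C$ (true but not established in the paper), which the line-bundle route avoids since $F_*L$ itself manufactures a stable bundle of rank $p>1$.
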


By definition, cohomological stability implies stability in the usual sense.In characteristic zero, any exterior power of a semi-stable bundle is also semi-stable, so cohomological semi-stability is equivalent to semi-stability.However,
in positive characteristic, since exterior product no longer preserves semi-stability, cohomological semi-stability is not equivalent to slope semi-stability. So we intended to find counterexamples of vector bundles that are semi-stable but not cohomologically semi-stable and we only confirm that stability is not equivalent to cohomological stability in positive characteristic.
\begin{prop}(Proposition \ref{costablity})
Let $C$ be a smooth projective curve of genus $g\geq2$ defined over an algebraically closed field $k$ of characteristic $p>0$, then there exists a vector bundle  $E$ which is stable but not cohomologically stable.
\end{prop}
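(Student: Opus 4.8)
The plan is to obtain the Proposition as a direct consequence of Corollary~\ref{unstablity}, once the notion of cohomological stability is made explicit. Recall that a vector bundle $E$ of rank $r$ is \emph{cohomologically stable} precisely when every exterior power $\wedge^i E$ with $0 < i < r$ is stable; setting $i = 1$ recovers the remark, quoted above, that cohomological stability implies ordinary stability. Consequently, to produce a bundle that is stable but not cohomologically stable it suffices to exhibit one stable $E$ for which a single intermediate exterior power $\wedge^i E$ fails to be stable.

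First I would take $E$ to be the stable bundle furnished by Corollary~\ref{unstablity}, whose exterior square $\wedge^2 E$ is not even semi-stable. Since stability implies semi-stability, the failure of semi-stability is a fortiori a failure of stability, so $\wedge^2 E$ is not stable. I would also note that, because every line bundle is semi-stable, the non-semi-stability of $\wedge^2 E$ forces $\operatorname{rank}(\wedge^2 E) > 1$, hence $\operatorname{rank} E \geq 3$; thus $i = 2$ is a genuine intermediate index and $\wedge^2 E$ is a legitimate witness for the failure of cohomological stability (for $r = 2$ one would only have $\wedge^2 E = \det E$, which is automatically stable). Combining these: $E$ is stable while $\wedge^2 E$ is not, so $E$ is not cohomologically stable, which is the assertion of the Proposition.

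In this reading the Proposition carries little new content beyond Corollary~\ref{unstablity}, and the deduction above is formal; the substantive work sits upstream, in guaranteeing that the destabilized bundle is genuinely \emph{stable} rather than merely semi-stable. That is exactly the refinement which upgrades the statement from a comparison of semi-stability with cohomological semi-stability to the sharper comparison of stability with cohomological stability. The one point in the deduction that calls for care is definitional: one must fix the convention that cohomological stability means stability of all proper exterior powers, equivalently that the destabilizing comparison is against the slope $\mu(\wedge^i E) = i\,\mu(E)$. Under any such convention the argument applies verbatim, since $\wedge^2 E$ admits a subsheaf of slope strictly exceeding $2\,\mu(E)$, which already violates even the weaker semi-stability inequality.
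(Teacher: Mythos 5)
Your argument hinges on the claim that cohomological stability of $E$ means stability of every exterior power $\wedge^i E$, but that is not the definition used in the paper. By Definition \ref{def}, $E$ fails to be cohomologically stable only if there is a \emph{line bundle} $A$ with $\deg A\geq t\mu(E)$ and $H^0(\wedge^tE\otimes A^{-1})\neq0$, i.e.\ a rank-one subsheaf of $\wedge^tE$ of degree at least $\mu(\wedge^tE)=t\mu(E)$. Knowing that $\wedge^2E$ is not semi-stable gives a destabilizing subbundle of slope greater than $2\mu(E)$, but that subbundle has rank bigger than one (in the situation of Corollary \ref{unstablity} it is $F^n_*(L^{2}\otimes\Omega_C^{p^n-1})$, of rank $p^n$), and a higher-rank subbundle of large slope need not contain a line subbundle of large degree. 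The margin here is in fact too thin for any general-position argument: the slope of the destabilizing subbundle exceeds $2\mu(E)$ only by $(1-p^{-n})(g-1)$, which is less than the defect one must allow for the maximal sub-line-bundle of a rank-$p^n$ bundle on a genus-$g$ curve. So the Proposition is not a formal consequence of Corollary \ref{unstablity}, and your deduction has a genuine gap at exactly this point. (Your side remark that the case $i=1$ recovers ``cohomological stability implies stability'' is likewise an artifact of the incorrect definition; with the paper's definition the implication comes from $\det F=\wedge^tF\hookrightarrow\wedge^tE$ for a rank-$t$ subbundle $F$.)

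The paper closes this gap by exploiting the specific shape of the destabilizing subsheaf rather than its mere existence: for $E=F^n_*L$ the subsheaf of $E\wedge E$ supplied by Lemma \ref{keylem} is itself the push-forward $F^n_*M$ of a line bundle $M$ (namely $M=L^{2}\otimes\Omega_C^{p^n-1}$ when $p>2$, with the $B_1$ variant when $p=2$), and the projection formula gives $H^0(F^n_*M\otimes A^{-1})=H^0(M\otimes A^{-p^n})$. Choosing $\deg L$ so that $\deg M$ is divisible by $p^n$ and taking $A$ to be a $p^n$-th root of $M$ produces a nonzero section of $\wedge^2E\otimes A^{-1}$ with $\deg A=\deg M/p^{n}=2\mu(E)$, which is precisely the violation of cohomological stability demanded by Definition \ref{def}. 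This arithmetic choice of $L$ and $A$, and the use of the projection formula to manufacture the section, is the actual content of the proof and is absent from your proposal.
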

Based on the above observation, we propose the following conjecture.
\begin{conj}
Let $C$ be a smooth projective curve defined over an algebraically closed field $k$ of characteristic $p>0$, and $E$ be a vector bundle over $C$. Then $E$ is semi-stable if and only if $E$ is cohomologically semi-stable.
\end{conj}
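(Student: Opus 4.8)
The plan is not to construct anything new but to re-use the stable bundle already furnished by the preceding Corollary, and to observe that it automatically fails cohomological stability. Recall that cohomological stability is defined through the exterior powers: it requires $\wedge^{i}E$ to be stable — in particular semi-stable — for every $1\le i<\operatorname{rk}(E)$ (taking $i=1$ recovers ordinary stability, which is why cohomological stability implies stability). Hence, to prove the Proposition, it is enough to exhibit a single stable bundle $E$ one of whose exterior powers is not even semi-stable; the second exterior power $\wedge^{2}E$ is the natural candidate.

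First I would set $E:=F^{n}_{*}L$ for a line bundle $L$ on $C$, choosing the characteristic and the Frobenius iterate so that the hypothesis ``$p>3$ or $n>1$'' of Theorem \ref{unstablity} is met. This $E$ is stable: this is exactly the stability of the Frobenius direct image asserted in the Corollary. Next I would apply Theorem \ref{unstablity}, whose conclusion is precisely that $F^{n}_{*}L\wedge F^{n}_{*}L=\wedge^{2}E$ is not semi-stable. By the definition recalled above, cohomological stability of $E$ would force $\wedge^{2}E$ to be semi-stable; since it is not, $E$ cannot be cohomologically stable. As $E$ is nonetheless stable, it is the desired bundle.

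The substantive difficulty — producing a stable bundle whose second exterior power is destabilized — has already been absorbed into Theorem \ref{unstablity} and the Corollary, so no new computation is required here. The only step needing care is the bookkeeping with the definition: I must check that the notion of cohomological stability in force really does list semi-stability of $\wedge^{2}E$ among its requirements, so that the failure established by Theorem \ref{unstablity} genuinely obstructs it, rather than merely violating some auxiliary property of the exterior power. I would also flag, as the introduction does, that this argument stops short of the stronger aim of a merely semi-stable (not stable) bundle failing cohomological semi-stability: because the Corollary delivers a genuinely stable $E$, closing that gap would require a semi-stable-but-unstable input whose exterior power is still destabilized, and that remains open (cf. the Conjecture).
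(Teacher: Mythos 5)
You are proving the wrong statement, and with an invalid inference. The statement at hand is the Conjecture, which the paper explicitly leaves \emph{open}: the author states that they ``intended to find counterexamples of vector bundles that are semi-stable but not cohomologically semi-stable'' and only managed the weaker Proposition \ref{costablity}; there is no proof in the paper to compare against. Your proposal instead targets Proposition \ref{costablity} (stable but not cohomologically stable), and even as a purported refutation of the Conjecture it cannot work: by Definition \ref{def}, to violate cohomological \emph{semi-}stability you need $H^0(\wedge^t E\otimes A^{-1})\neq 0$ for some line bundle $A$ with $\deg A > t\mu(E)$ strictly, whereas the construction available in the paper only achieves $\deg A = 2\mu(E)$ on the nose --- which contradicts cohomological stability (where $a\geq t\mu(E)$ suffices) but is perfectly consistent with cohomological semi-stability. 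That boundary case is exactly why the Conjecture survives the paper's example.

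Worse, your key step rests on a misremembered definition. Definition \ref{def} does \emph{not} say that cohomological (semi-)stability requires the exterior powers $\wedge^t E$ to be (semi-)stable; it is an $H^0$-vanishing condition against line bundles $A$ of large degree, i.e.\ a condition on \emph{line} subsheaves of $\wedge^t E$. The correct implication runs the other way: if every $\wedge^t E$ is semi-stable, then any nonzero map $A\to\wedge^t E$ forces $\deg A\leq t\mu(E)$, so $E$ is cohomologically semi-stable. Conversely, instability of $\wedge^2 E$ does \emph{not} by itself defeat cohomological stability: the destabilizing sub-bundle produced by Lemma \ref{keylem}, e.g.\ $F^n_*(L\otimes L\otimes\Omega_C^{p^n-1})$, has rank $p^n>1$, and nothing guarantees it contains a line subsheaf of degree $\geq 2\mu(E)$. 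The paper's proof of Proposition \ref{costablity} supplies precisely this missing step: choose $\deg L$ so that the relevant degree is divisible by $p^n$, take $A$ to be a $p^n$-th root of $L\otimes L\otimes\Omega_C^{p^n-1}$ in the divisible group $\mathrm{Pic}(C)$, and use the projection formula to get $h^0(F^n_*(L\otimes L\otimes\Omega_C^{p^n-1})\otimes A^{-1}) = h^0(L\otimes L\otimes\Omega_C^{p^n-1}\otimes A^{-p^n}) = h^0(\mathcal{O}_C)\neq 0$ with $\deg A = 2\mu(F^n_*L)$. Without that explicit construction your argument has a genuine gap; with it, you recover Proposition \ref{costablity} --- but still not the Conjecture, which asks for a semi-stable, non-stable phenomenon (or a proof of equivalence) that neither you nor the paper provides.
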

\textbf{Acknowledgement:} The author would like to thank Yi Gu and Lingguang Li for their valuable discussions concerning this work.

\section{Preliminary}
In this section, we recall some definitions and facts about the stability of vector bundles on curves.
\subsection{Notation}
Let $C$ be a smooth projective curve of genus $g>1$ over an algebraically closed field $k$ of positive characteristic and $\Omega_C$ be the sheaf of $1$-forms on $C$. The absolute Frobenius morphism $F: C\rightarrow C$ is defined by the map $f\rightarrow f^{p}$ in $\mathcal{O}_C$, and the $n$-th Frobenius morphism $F^n$ is $n$-times composition of $F$, where $n\geq0$. If $n=0$, we mean $F^0=Id$. Let $B_1$ denote the exact $1$-form which sits in the following exact sequence
$$0\rightarrow\mathcal{O}_C\stackrel{F^\#}{\rightarrow}F_*\mathcal{O}_C\rightarrow B_1\rightarrow0.$$
and is semi-stable of degree $(p-1)(g-1)$.
\subsection{Stability}
\begin{defn}\label{def}
Let $E$ be a vector bundle on a smooth projective curve $C$. Recall that the \emph{slope} of $E$ is the rational number $$\mu(E)=\frac{\deg E}{\text{rk}(E)},$$ and $E$ is called \emph{stable} (resp. \emph{semi-stable}) if $$\mu(F)<\mu(E)\ (\text{resp. } \mu(F)\leq\mu(E))$$ for every sub-bundle $F\subset E$ with $\text{rk}(F)<\text{rk}(E)$.

Moreover, $E$ is called \emph{cohomologically stable} (resp. \emph{cohomologically semi-stable}) if for every line bundle $A$ of degree $a$ and every integer $t<\text{rk}(E)$, $$H^0(\wedge^tE\otimes A^{-1})=0$$ whenever $a\geq t\mu(E)$ (resp. $a>t\mu(E)$).
\end{defn}
Let $E$ be a vector bundle on $C$. Then there exists a unique filtration,  which is called the
\emph{Harder-Narasimhan filtration},
$$0=E_0\subset E_1\subset\cdots\subset E_s=E$$
such that $E_i/E_{i-1}$ is semi-stable of slope $\mu_i$ and
$$\mu_1>\mu_2>\cdots>\mu_s.$$
\begin{prop}\label{prop2.2}
Let $E$ be a vector bundle on $C$. Then
$$\mu(F^n_*(E))=\frac{\mu(E)}{p^n}+(1-\frac{1}{p^n})(g-1).$$
\end{prop}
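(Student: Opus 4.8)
The plan is to obtain the slope by computing the rank and the degree of $F^n_*E$ separately. Since $C$ is smooth over the perfect field $k$, the absolute Frobenius $F\colon C\to C$ is finite and flat of degree $p$ (the degree being $[k(C):k(C)^{p}]=p$, flatness following from Kunz's theorem); hence $F^n$ is finite flat of degree $p^{n}$ and $F^n_*E$ is again locally free, with $\text{rk}(F^n_*E)=p^{n}\,\text{rk}(E)$. It then remains only to pin down $\deg(F^n_*E)$, for which I would use the invariance of the Euler characteristic together with Riemann--Roch.

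Because a finite morphism is affine, $F^n_*$ is exact and the higher direct images vanish, giving $H^i(C,F^n_*E)=H^i(C,E)$ for all $i$ and hence $\chi(C,F^n_*E)=\chi(C,E)$. Writing $r=\text{rk}(E)$ and $d=\deg E$, and noting that the genus $g$ is unchanged because the source and target of the absolute Frobenius are the same curve $C$, Riemann--Roch on $C$ gives $\chi(E)=d+r(1-g)$ and $\chi(F^n_*E)=\deg(F^n_*E)+p^{n}r(1-g)$. Equating these yields
$$\deg(F^n_*E)=d+r(1-g)\bigl(1-p^{n}\bigr),$$
and dividing by the rank $p^{n}r$, after rewriting $(1-g)(1-p^{n})/p^{n}=(g-1)(1-p^{-n})$, produces exactly the claimed identity.

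The calculation is routine once these two inputs are secured, so the real content lies in justifying them. The mild subtlety I would watch is that the absolute Frobenius is only $p$-linear over $k$, so the two $k$-module structures on $H^i(C,F^n_*E)$ and $H^i(C,E)$ differ by the field automorphism $\lambda\mapsto\lambda^{p^{n}}$ of $k$; since $k$ is perfect this is an isomorphism and leaves all $k$-dimensions, and hence $\chi$, unchanged. (One may also sidestep this entirely by replacing $F$ with the relative Frobenius $C\to C^{(p)}$, which for perfect $k$ is an isomorphic copy of $C$.) An alternative organization is to prove the $n=1$ case first and then induct via $F^n_*E=F_*\bigl(F^{n-1}_*E\bigr)$ and the recursion $\mu(F^n_*E)=p^{-1}\mu(F^{n-1}_*E)+(1-p^{-1})(g-1)$, which telescopes to the same closed form; I would present the direct Riemann--Roch computation as the cleaner route.
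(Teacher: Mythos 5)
Your proposal is correct and follows exactly the paper's own route: invariance of the Euler characteristic under the finite morphism $F^n$ combined with Riemann--Roch to extract $\deg(F^n_*E)=\deg(E)+\mathrm{rk}(E)(p^n-1)(g-1)$. The paper states this in one line; you simply supply the supporting details (rank $p^n\,\mathrm{rk}(E)$, vanishing of higher direct images, the $p$-linearity remark), all of which are accurate.
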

\begin{proof}
Note that $\chi(F^n_*E)=\chi(E)$, then by the Riemann-Roch theorem we have
$\deg(F^n_*E)=\deg(E)+\text{rk}(E)(p^n-1)(g-1)$ and hence the result.
\end{proof}
\subsection{A canonical filtration}
Let $E$ be a vector bundle on $C$. Then $V:=F^*(F_*E)$ has a canonical
connection $$\nabla: V\rightarrow V\otimes\Omega_C$$ with zero $p$-curvature (cf. \cite[Theorem 5.1]{K70}).
So there is a canonical filtration
$$0=V_p \subset V_{p-1}\subset\cdots\subset V_1\subset V_0=V\eqno{\#}$$
where $V_1=\rm{ker} (V=F^*F_*E \twoheadrightarrow E)$ and
$$V_{l+1}={\rm ker} (V_{l}\stackrel{\nabla}{\rightarrow}V\otimes\Omega_C\twoheadrightarrow V/V_l\otimes\Omega_C )$$
when $l>0$.
Then we have the following lemma by a local calculation (cf.\cite[Lem 2.1]{Sun08}).
\begin{lem}\label{lem2.3}(\cite[Theorem 5.3]{J06})
Under the above assumption, we have
\begin{enumerate}
  \item $V_0/V_1\cong E,\nabla(V_{l+1})\subset V_l\otimes\Omega_C$ for $l>0$,
  \item $V_l/V_{l+1}\stackrel{\nabla}{\rightarrow}(V_{l-1}/V_l)\otimes\Omega_C$ is an isomorphism for $1\leq l\leq p-1$, and
  \item if $g\geq 2$ and $E$ is semi-stable, then the canonical filtration $(\#)$ is nothing but the Harder-Narasimhan filtration.
\end{enumerate}
\end{lem}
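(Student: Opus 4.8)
The plan is to reduce the entire statement to an explicit local computation, using that $\nabla$ is the Cartier connection on $V=F^*F_*E$, i.e. the canonical connection whose horizontal sections are (locally) the pulled-back sections of $F_*E$ and whose $p$-curvature vanishes. Part (1) is then formal. The surjection $V=F^*F_*E\twoheadrightarrow E$ is the adjunction counit and $V_1$ is by definition its kernel, so $V_0/V_1\cong E$ is immediate. For $l>0$, $V_{l+1}$ is defined as the kernel of the composite $V_l\xrightarrow{\nabla}V\otimes\Omega_C\twoheadrightarrow (V/V_l)\otimes\Omega_C$; hence any local section of $V_{l+1}$ has $\nabla$-image in the kernel $V_l\otimes\Omega_C$ of the projection, i.e. $\nabla(V_{l+1})\subset V_l\otimes\Omega_C$.

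Part (2) is the heart of the matter, and I would prove it by choosing a local coordinate $t$ at a point $x\in C$ and building an adapted frame. Take first $E=\mathcal{O}_C$, write $A=\mathcal{O}_{C,x}$ and $B=A^p\subset A$, so that $V=F^*F_*\mathcal{O}_C$ is locally $A\otimes_B A$, free of rank $p$, and the counit $\epsilon$ is the multiplication map. The element $\delta=1\otimes t-t\otimes 1$ generates the ideal of the diagonal, and because $t^p\in B$ one gets $\delta^p=0$ (freshman's dream), while $1,\delta,\dots,\delta^{p-1}$ is a local frame of $V$ with $\epsilon(\delta^l)=0$ for $l\ge 1$. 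A short computation gives $\nabla(\delta^l)=-l\,\delta^{l-1}\otimes dt$. From this one reads off by induction that $V_l$ is locally generated by $\delta^l,\dots,\delta^{p-1}$ (so the filtration has length exactly $p$), and that the induced map $V_l/V_{l+1}\to(V_{l-1}/V_l)\otimes\Omega_C$ is multiplication by the scalar $-l$, which is an isomorphism precisely when $p\nmid l$, i.e. for $1\le l\le p-1$. Since these maps are canonical and independent of the chosen $t$, they glue to the global statement; the case of arbitrary rank follows by passing to a local trivialization of $E$, under which $F_*E$ is a direct sum of copies of $F_*\mathcal{O}_C$ and the computation proceeds componentwise, yielding $V_l/V_{l+1}\cong E\otimes\Omega_C^{\otimes l}$.

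Part (3) is then a slope bookkeeping argument. Iterating part (2) together with $V_0/V_1\cong E$ gives $V_l/V_{l+1}\cong E\otimes\Omega_C^{\otimes l}$, which is semi-stable because $E$ is semi-stable and $\Omega_C^{\otimes l}$ is a line bundle, and which has slope $\mu(E)+l(2g-2)$. For $g\ge 2$ these slopes are \emph{strictly increasing} in $l$, so reading the filtration $0=V_p\subset V_{p-1}\subset\cdots\subset V_0=V$ from the bottom exhibits semi-stable successive quotients of strictly decreasing slope; by the uniqueness of the Harder--Narasimhan filtration, the canonical filtration $(\#)$ must coincide with it.

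The main obstacle is the local computation in part (2): pinning down the nilpotent generator $\delta$ of the diagonal ideal together with the formula $\nabla(\delta^l)=-l\,\delta^{l-1}\otimes dt$, and observing that the constants $-l$ are units exactly in the range $1\le l\le p-1$, which is what simultaneously fixes the length $p$ of the filtration and forces the connecting maps to be isomorphisms. A secondary point that is easy to overlook in part (3) is that the slopes \emph{increase} along the displayed filtration, so that the Harder--Narasimhan ordering runs in the opposite direction to the indexing of $(\#)$.
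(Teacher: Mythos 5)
Your proof is correct, and it is essentially the same argument as the one the paper relies on: the paper does not prove Lemma \ref{lem2.3} itself but cites \cite[Theorem 5.3]{J06} and \cite[Lem 2.1]{Sun08}, and the local computation you describe (the nilpotent generator $\alpha=1\otimes t-t\otimes 1$ with $\alpha^p=0$, the formula $\nabla(\alpha^l)=-l\alpha^{l-1}\otimes\mathrm{d}t$, and the resulting frame $\{\alpha^l,\dots,\alpha^{p-1}\}$ of $V_l$) is exactly the one the paper reproduces later in the proof of Lemma \ref{keylem}. Your slope bookkeeping for part (3), including the observation that the slopes increase with $l$ so the Harder--Narasimhan ordering runs opposite to the indexing of $(\#)$, is also the standard and correct way to conclude.
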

This lemma can be used to prove the following result (see \cite{LP08} for the case of line bundles).
\begin{lem}(\cite[Theorem 2.2]{Sun08})\label{2.4}
Let $C$ be a smooth projective curve of genus $g\geq1$. Then $F_*E$ is semi-stable whenever $E$ is semi-stable. If $g\geq2$, then $F_*E$ is stable
whenever $E$ is stable.
\end{lem}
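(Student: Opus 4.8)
The plan is to exploit the canonical filtration of $V:=F^*F_*E$ supplied by Lemma \ref{lem2.3}, combined with the slope formula of Proposition \ref{prop2.2}. First I would recast semi-stability of $F_*E$ as the statement that no saturated subsheaf (sub-bundle) $W\subset F_*E$ with $0<\mathrm{rk}(W)<\mathrm{rk}(F_*E)$ satisfies $\mu(W)>\mu(F_*E)$; it suffices to rule out such a $W$, for which one may take the maximal destabilising subsheaf. Pulling back along the absolute Frobenius gives an inclusion $F^*W\hookrightarrow F^*F_*E=V$ with $\deg(F^*W)=p\,\deg(W)$ and $\mathrm{rk}(F^*W)=\mathrm{rk}(W)$, hence $\mu(F^*W)=p\,\mu(W)$. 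Crucially, since $F^*W$ is itself a Frobenius pull-back, it is horizontal for the canonical connection $\nabla$, i.e. $\nabla(F^*W)\subset F^*W\otimes\Omega_C$.

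Next I would intersect $F^*W$ with the canonical filtration, setting $W_l:=F^*W\cap V_l$, so that the graded pieces inject as $W_l/W_{l+1}\hookrightarrow V_l/V_{l+1}$. Iterating the isomorphisms of Lemma \ref{lem2.3}(2) together with $V_0/V_1\cong E$ identifies $V_l/V_{l+1}\cong E\otimes\Omega_C^{\otimes l}$, which is semi-stable of slope $\mu(E)+l(2g-2)$ whenever $E$ is semi-stable (as $\Omega_C$ is a line bundle). Writing $r_l:=\mathrm{rk}(W_l/W_{l+1})$, semi-stability of $E\otimes\Omega_C^{\otimes l}$ then bounds each graded degree by $\deg(W_l/W_{l+1})\le r_l\bigl(\mu(E)+l(2g-2)\bigr)$.

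The crucial extra input — and the step I expect to be the main obstacle — is that the ranks $r_l$ are non-increasing in $l$. This should follow from horizontality: because $\nabla$ preserves $F^*W$ and carries $V_l$ into $V_{l-1}\otimes\Omega_C$ by Lemma \ref{lem2.3}(1), it induces maps $W_l/W_{l+1}\to(W_{l-1}/W_l)\otimes\Omega_C$ that are compatible with the isomorphisms of Lemma \ref{lem2.3}(2); being the restriction of an isomorphism to a subsheaf whose image lands in a subsheaf, each such map is injective, forcing $r_l\le r_{l-1}$. Granting $r_0\ge r_1\ge\cdots\ge r_{p-1}\ge0$ together with $\sum_l r_l=\mathrm{rk}(W)$, an elementary rearrangement (or Chebyshev's sum inequality, since the weights $l$ are non-decreasing while the $r_l$ are non-increasing) yields $\sum_l l\,r_l\le\tfrac{1}{2}(p-1)\,\mathrm{rk}(W)$.

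Summing the degree bounds and invoking Proposition \ref{prop2.2} in the form $p\,\mu(F_*E)=\mu(E)+(p-1)(g-1)$ gives $\deg(F^*W)\le\mathrm{rk}(W)\,\mu(E)+(2g-2)\sum_l l\,r_l\le\mathrm{rk}(W)\cdot p\,\mu(F_*E)$, hence $\mu(W)\le\mu(F_*E)$, contradicting the choice of $W$ and proving semi-stability; the argument is already valid for $g\ge1$, the case $g=1$ being the degenerate one where the $\Omega_C$-twists are degree zero. For stability when $g\ge2$ and $E$ is stable, I would analyse the equality case: $\mu(W)=\mu(F_*E)$ forces every inequality above to be an equality, so all $r_l$ are equal and each $W_l/W_{l+1}$ is a nonzero subsheaf of the stable bundle $E\otimes\Omega_C^{\otimes l}$ attaining its full slope. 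Stability then forces each such graded piece to be either $0$ or of full rank, whence $W=0$ or $F^*W=V$ (so $W=F_*E$); in either case $W$ is not a proper destabilising sub-bundle, giving the strict inequality $\mu(W)<\mu(F_*E)$.
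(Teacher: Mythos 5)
The paper does not actually prove this lemma: it is stated as a direct quotation of \cite[Theorem 2.2]{Sun08}, so there is no in-paper argument to compare against. What you have written is, in substance, a correct reconstruction of Sun's proof, and it is the ``right'' proof given the toolkit the paper sets up: you use exactly the canonical filtration of $V=F^*F_*E$ from Lemma \ref{lem2.3}, the horizontality of $F^*W$ under the canonical connection (which holds because the canonical connection on $F^*F_*E$ restricts to the canonical connection on the Frobenius pull-back $F^*W$), the induced injections $W_l/W_{l+1}\to (W_{l-1}/W_l)\otimes\Omega_C$ forcing $r_l\le r_{l-1}$, the subsheaf--slope bounds from semi-stability of $E\otimes\Omega_C^{\otimes l}$, and the Chebyshev-type estimate $\sum_l l\,r_l\le\tfrac{p-1}{2}\mathrm{rk}(W)$, which combined with Proposition \ref{prop2.2} gives $\mu(W)\le\mu(F_*E)$. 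The equality analysis for the stability claim is also sound: since the weights $l$ are non-constant, equality in the Chebyshev step forces all $r_l$ equal (this is where $2g-2>0$, i.e.\ $g\ge 2$, is genuinely used), and stability of each $E\otimes\Omega_C^{\otimes l}$ then forces every graded piece to be zero or everything, so $W=0$ or $W=F_*E$. Two minor points worth making explicit if you were to write this up in full: (i) $F$ is finite flat on a smooth curve, so $F^*W\hookrightarrow V$ really is injective of the same rank with $\deg F^*W=p\deg W$; and (ii) the identification $(F^*W\otimes\Omega_C)\cap(V_{l-1}\otimes\Omega_C)=W_{l-1}\otimes\Omega_C$ uses that $\Omega_C$ is a line bundle. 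Neither is a gap, just bookkeeping.
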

\subsection{Combination number modulo p}

\begin{lem}\label{2.5}
Let $p>0$ be a prime number. Then we have
\begin{itemize}
  \item ${p-1\choose h}\equiv(-1)^h \ \mathrm{mod}(p)$, and
  \item ${p-2\choose h}\equiv(-1)^h(h+1) \ \mathrm{ mod}(p)$.
\end{itemize}
\end{lem}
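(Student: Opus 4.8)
The plan is to reduce each binomial coefficient to a product of residues modulo $p$, using the elementary congruence $p-j\equiv -j \pmod p$, and then read off the sign. Throughout I take $0\le h\le p-1$, which is the only range in which the statements are meaningful: for $h\ge p$ the left-hand sides vanish while $(-1)^h$ does not. In this range $p\nmid h!$, so $h!$ is a unit in $\mathbb{Z}/p\mathbb{Z}$, which is what makes the cancellation at the end legitimate. To avoid any fuss over division I would work first with an integer identity and only reduce modulo $p$ afterwards.

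For the first congruence I would start from the integer identity
\[
\binom{p-1}{h}\,h!=(p-1)(p-2)\cdots(p-h),
\]
a product of $h$ factors $p-j$ with $1\le j\le h$. Reducing modulo $p$ gives $(p-1)(p-2)\cdots(p-h)\equiv(-1)(-2)\cdots(-h)=(-1)^h\,h! \pmod p$, hence $\binom{p-1}{h}\,h!\equiv(-1)^h\,h!\pmod p$. Cancelling the invertible factor $h!$ yields $\binom{p-1}{h}\equiv(-1)^h\pmod p$. For the second congruence I would argue identically from
\[
\binom{p-2}{h}\,h!=(p-2)(p-3)\cdots(p-1-h),
\]
which is again a product of $h$ factors $p-j$, but now with $j$ running from $2$ to $h+1$. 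Reducing each factor to $-j$ gives numerator $(-1)^h\,(2\cdot 3\cdots(h+1))=(-1)^h\,(h+1)!\pmod p$, and cancelling $h!$ leaves $(-1)^h(h+1)$.

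As an alternative to the second computation, once the first part is in hand one can use Pascal's rule $\binom{p-2}{h}+\binom{p-2}{h-1}=\binom{p-1}{h}\equiv(-1)^h\pmod p$ together with the initial value $\binom{p-2}{0}=1$, and check by induction that $a_h=(-1)^h(h+1)$ solves the recurrence $a_h+a_{h-1}=(-1)^h$. There is no genuine obstacle in either route; the only points demanding care are bookkeeping ones, namely tracking exactly which factors $p-j$ occur and over which range of $j$, and confirming $p\nmid h!$ so the final cancellation is valid. The sign $(-1)^h$ records the number of reduced factors, while the extra factor $(h+1)$ in the second identity reflects that the reduced numerator is $(h+1)!$ rather than $h!$.
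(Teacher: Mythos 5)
Your proof is correct and is essentially the paper's argument in unrolled form: the paper iterates the one-step congruence $\binom{p-1}{h}\equiv-\binom{p-1}{h-1}$ (resp.\ $\binom{p-2}{h}\equiv-\frac{h+1}{h}\binom{p-2}{h-1}$) coming from the ratio $\frac{p-h}{h}$, while you multiply out the same factors $p-j\equiv-j$ all at once and cancel $h!$. Your added care about the range $0\le h\le p-1$ and the invertibility of $h!$ modulo $p$ is a welcome (if minor) tightening of the same idea.
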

\begin{proof}
Note that $${p-1\choose h}=\frac{(p-1)!}{(p-1-h)!h!}={p-1\choose h-1}\frac{p-h}{h}\equiv-{p-1\choose h-1}\ {\rm mod}(p)$$ and $${p-2\choose h}=\frac{(p-2)!}{(p-2-h)!h!}={p-2\choose h-1}\frac{p-h-1}{h}\equiv-\frac{h+1}{h}{p-2\choose h-1}\ {\rm mod}(p).$$ Then we obtain the result by iteration.
\end{proof}
\section{Instability of $F^n_*E\wedge F^n_*E$}
In this section,  we first make the following observation: $F_*^n(E)\wedge F_*^n(E)$ admits a canonical sub-bundle. We then utilize this observation to derive our main result by computing the slopes of the bundles involved in the subsequent assertions.
\begin{lem}\label{keylem}
Let $C$ be a smooth projective curve defined over an algebraically closed field $k$ of characteristic $p>0$, and $E$ be a vector bundle of rank $r$ on $C$. Then
\begin{enumerate}
  \item when $r>1$, there is a natural injection
  $$F_*^n(E\wedge E\otimes \Omega_C^{p^n-1})\hookrightarrow F_*^n(E)\wedge F_*^n(E),$$
  where $n\in \mathbb{Z}^+$;
  \item when $r=1$ and $p>2$, there is a natural injection
  $$F_*^n(E\otimes E\otimes \Omega_C^{p^n-2})\hookrightarrow F_*^n(E)\wedge F_*^n(E),$$
  where $n\in \mathbb{Z}^+$;
  \item when $r=1$ and $p=2$, there is a natural injection
  $$F_*^{n-1}(B_1\otimes E\otimes \Omega_C^{p^{n-1}-1})\hookrightarrow F_*^n(E)\wedge F_*^n(E),$$
  where $n\in \mathbb{Z}^+$.
\end{enumerate}
\end{lem}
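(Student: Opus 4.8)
The plan is to establish the three statements for $n=1$ and then to obtain all $n$ by induction, repeatedly stripping off one Frobenius. For the inductive step write $G:=F_*^{n-1}E$, so that $F_*^nE=F_*G$ and $\operatorname{rk}G=p^{n-1}>1$ for $n\geq2$; applying the $n=1$ form of part (1) to $G$ gives $F_*(\wedge^2G\otimes\Omega_C^{p-1})\hookrightarrow\wedge^2F_*G=\wedge^2F_*^nE$. Into $\wedge^2G=\wedge^2F_*^{n-1}E$ I would substitute the sub-bundle furnished by the level-$(n-1)$ case of the relevant statement (1), (2) or (3) for $E$, and then simplify the resulting composite with the projection formula and the identity $F^{(n-1)*}\Omega_C\cong\Omega_C^{\otimes p^{n-1}}$. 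An elementary exponent bookkeeping collapses the accumulated twist to the asserted one; for instance in case (1) the power of $\Omega_C$ becomes $(p^{n-1}-1)+(p-1)p^{n-1}=p^n-1$, and the computations for (2) and (3) are the same in spirit. Hence it suffices to treat $n=1$.

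For the base case, put $V:=F^*F_*E$ and recall from Lemma \ref{lem2.3} the canonical filtration $0=V_p\subset V_{p-1}\subset\cdots\subset V_0=V$ with $V_l/V_{l+1}\cong E\otimes\Omega_C^{l}$. As $F^*$ commutes with exterior powers, $F^*(\wedge^2F_*E)\cong\wedge^2V$, and $V_\bullet$ induces a filtration on $\wedge^2V$ whose graded pieces are the $(E\otimes\Omega_C^i)\otimes(E\otimes\Omega_C^j)$ for $i<j$ together with the $\wedge^2(E\otimes\Omega_C^i)$. Its deepest non-zero term is a sub-bundle of $\wedge^2V$: when $r>1$ it is $\wedge^2V_{p-1}\cong\wedge^2E\otimes\Omega_C^{2p-2}$, and when $r=1$ it is $\wedge^2V_{p-2}\cong E\otimes E\otimes\Omega_C^{2p-3}$, the latter being a proper sub-bundle exactly when $p>2$. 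Grothendieck duality for the finite flat morphism $F$, whose relative dualizing sheaf is $\omega_{C/C}\cong\Omega_C^{1-p}$, gives a functorial isomorphism $\operatorname{Hom}(F_*A,\,\wedge^2F_*E)\cong\operatorname{Hom}(A\otimes\Omega_C^{p-1},\,\wedge^2V)$. Inserting the inclusion above with $A=\wedge^2E\otimes\Omega_C^{p-1}$ (resp. $A=E\otimes E\otimes\Omega_C^{p-2}$) then yields exactly the maps asserted in (1) (resp. (2)) for $n=1$.

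What remains is the injectivity of these adjoint maps. Because $F_*A$ is locally free, hence torsion-free on the integral curve $C$, injectivity may be checked at the generic point; I would do this by an explicit calculation in a local uniformizer, in the style of \cite[Lem 2.1]{Sun08}, using the concrete description of the canonical connection $\nabla$ provided by Lemma \ref{lem2.3}. I expect this local verification to be the main obstacle: the chosen sub-bundle is not $\nabla$-horizontal, so the adjunction genuinely intertwines the filtration with $\nabla$, and the injectivity is not formal.

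Finally, the case $r=1$, $p=2$ must be handled separately, since there $V_{p-2}=V$ and the filtration recipe only produces the trace map $F_*(E^{\otimes2})\to\det F_*E$, which cannot be injective for rank reasons. Here I would instead tensor the defining sequence $0\to\mathcal{O}_C\to F_*\mathcal{O}_C\to B_1\to0$ by $E$ and use the projection formula $F_*\mathcal{O}_C\otimes E\cong F_*(F^*E)=F_*(E^{\otimes2})$ to obtain $0\to E\to F_*(E^{\otimes2})\to B_1\otimes E\to0$. The trace map above is built from the Cartier operator and so annihilates the sub-bundle $E$ (the image of the exact sections); it therefore factors through an injection $B_1\otimes E\hookrightarrow\det F_*E$, which by the degree formula behind Proposition \ref{prop2.2} relates two line bundles of the same degree $\deg E+(g-1)$ and is thus an isomorphism. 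With the base case $n=1$ in hand for all three parts, the induction of the first paragraph completes the proof.
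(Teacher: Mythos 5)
Your construction of the candidate maps is clean and the numerology is right: the induction from $n-1$ to $n$ (apply the $n=1$ case of (1) to $G=F_*^{n-1}E$, substitute the level-$(n-1)$ sub-bundle, and collapse the twists via the projection formula) is exactly how the paper iterates, and your identification of the deepest nonzero step of the induced filtration on $\wedge^2V$ together with the adjunction $\operatorname{Hom}(F_*A,\wedge^2F_*E)\cong\operatorname{Hom}(A\otimes\Omega_C^{p-1},\wedge^2V)$ produces natural maps with the correct sources. But the proof has a genuine gap, and you name it yourself: the injectivity of these adjoint maps is never established. You write that you ``would'' check it by a local computation and that ``the injectivity is not formal'' --- and indeed it is not, since the adjoint is the composite $F_*A\hookrightarrow F_*(\wedge^2V\otimes\Omega_C^{1-p})\to\wedge^2F_*E$, whose second arrow (induced by the trace $F_*\Omega_C^{1-p}\to\mathcal{O}_C$) has a large kernel. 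Producing a nonzero natural map between these sheaves is the easy part; the entire content of the lemma is the injectivity, and that is precisely where the paper's proof does all of its work.

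Concretely, the paper avoids your duality step altogether: it realizes the candidate inside $F_*E\otimes F_*E$ as the genuinely injective image of $F_*(E\otimes V_{p-1})$ (resp.\ $F_*(E\otimes V_{p-2})$), and then computes, in the local basis $\{t^k\alpha^{p-1}\}$ (resp.\ $\{t^k\alpha^{p-2}\}$) with $\alpha=1\otimes t-t\otimes1$ and using $\binom{p-1}{h}\equiv(-1)^h$, $\binom{p-2}{h}\equiv(-1)^h(h+1)\pmod p$, exactly which elements are symmetric tensors. The outcome --- all of $t^k\alpha^{p-1}$ are symmetric, while $t^k\alpha^{p-2}$ is symmetric only for $k=0$, $p=2$ --- shows that the kernel of the projection to $F_*E\wedge F_*E$ is exactly the symmetric part, which is what yields the injections in (1) and (2). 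Some such computation (or an equivalent argument) must be supplied before your proof is complete; the same remark applies to your $p=2$ step, where the assertion that the trace map annihilates the sub-bundle $E\subset F_*(E^{\otimes2})$, needed to conclude $\wedge^2F_*E\cong B_1\otimes E$, is again exactly a local symmetry verification rather than a formal consequence of naturality.
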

\begin{proof}
By Lemma \ref{lem2.3} there is a canonical filtration
$$0= V_p\subset\cdots\subset V_1\subset V_0=F^*F_*(E)\eqno(*)$$ with $V_i/V_{i+1}\cong E\otimes\Omega_C^i$.
%induced by the natural map $F^*F_*E\twoheadrightarrow E$ and the canonical connection on $F^*F_*E$ ().
After tensoring the above filtration with $E$ and pushing it forward by $F_*$,  by projection formula we obtain the following filtration of $F_*E\otimes F_*E$:
$$0= F_*(E\otimes V_p)\subset\cdots\subset F_*(E\otimes  V_1)\subset F_*(E\otimes V_0)\cong F_*E\otimes F_*E $$ with $F_*(E\otimes V_i )/F_*(E\otimes V_{i+1})\cong  F_*(E\otimes E\otimes \Omega_C^i)$.

Next, we give a local basis of the first non-zero sub-bundle $F_*(E\otimes V_{p-1})\cong F_*(E\otimes E\otimes \Omega_C^{p-1})$. Since it is a local problem,
we first reduce to the case $E=\mathcal{O}_C$ and $C=\rm {Spec}\, k[[t]]$, then $F^*F_*\mathcal{O}_C=k[[t]]\otimes_{k[[s]]}k[[t]]$ where $s=t^p$. Set $I_0=k[[t]]\otimes_{k[[s]]}k[[t]]$ and $I_1=\ker(k[[t]]\otimes_{k[[s]]}k[[t]]\twoheadrightarrow k[[t]])$ which is also an ideal of the $k[[t]]$-algebra $I_0=k[[x]]\otimes_{k[[s]]}k[[x]]$.
Set $\{\alpha:=1\otimes t-t\otimes1\}$, and it is easy to see that $\{\alpha,\alpha^2,\ldots,\alpha^{p-1}\}$ is a basis of $I_1$ as $k[[t]]$-module ($\alpha^p=0$). Note that the filtration ($*$) is defined by $I_{l+1}=\ker(I_l\stackrel{\nabla}{\rightarrow}I_0\otimes \Omega_C\twoheadrightarrow I_0/I_l\otimes \Omega_C)$ and
$\nabla(\alpha^l)=-l\alpha^{l-1}\otimes \mathrm{d}t$.
Thus, as a free $\mathcal{O}$-module, $I_l$ has a basis $\{\alpha^l,\alpha^{l+1},\ldots,\alpha^{p-1}\}$.
%, which means that $I_l/I_{l+1}$ has a basis $\alpha^l$.

Now, we return to the case when $r>1$ and suppose that locally $E=\mathcal{O}\cdot e_1\oplus\cdots\oplus\mathcal{O}\cdot e_r$. Then the local basis of $F_*(E\otimes E\otimes \Omega_C^{p-1})$ consists of:
\begin{eqnarray*}
     \{(e_i\otimes e_j)\cdot t^k\alpha^{p-1}&:=&\sum_{h=0}^{p-1}{p-1\choose h}(-1)^ht^{k+h}e_i\otimes t^{p-1-h}e_j\\
     &=&\sum_{h=0}^{p-1}t^{k+h}e_i\otimes t^{p-1-h}e_j,\, i, j=1,2,\ldots,r, k=0,1,\ldots,p-1\}
\end{eqnarray*}
since ${p-1\choose h}\equiv(-1)^h \ \mathrm{mod}(p)$ by Lemma \ref{2.5}.
Note that
\begin{eqnarray*}
t^k\alpha^{p-1}&=&\sum_{h=0}^{p-1}t^{k+h}\otimes t^{p-1-h}=\sum_{i=k}^{p-1+k}t^{i}\otimes t^{p-1+k-i}=\sum_{i=k}^{p-1}t^{i}\otimes t^{p-1+k-i}+\sum_{i=p}^{p-1+k}t^{i}\otimes t^{p-1+k-i}\\
&=&\sum_{i=k}^{p-1}t^{i}\otimes t^{p-1+k-i}+\sum_{j=0}^{k-1}t^{p+j}\otimes t^{k-1-j}=\sum_{i=k}^{p-1}t^{i}\otimes t^{p-1+k-i}+s\sum_{j=0}^{k-1}t^{j}\otimes t^{k-1-j},
\end{eqnarray*}
which implies that $t^k\alpha^{p-1}, k=0,1,\ldots,p-1$ are all symmetric (if $m>n$, we mean $\Sigma_{i=m}^{n}a_i=0$).
So the symmetric part of $F_*(E\otimes E\otimes \Omega_C^{p-1})$ is generated by
$$(e_i\otimes e_i)\cdot t^k\alpha^{p-1}, i=1,2,\ldots,r, k=0,1,\ldots,p-1, \text{ and }$$
$$ (e_i\otimes e_j+e_j\otimes e_i)\cdot t^k\alpha^{p-1}, i\neq j, k=0,1,\ldots,p-1,$$
which form a basis of the kernel of the natural map:
$F_*(E\otimes E\otimes \Omega_C^{p-1})\twoheadrightarrow F_*(E\wedge E\otimes \Omega_C^{p-1})$ as $k[[s]]$-module. Therefore, we obtain an injection
$$F_*(E\wedge E\otimes \Omega_C^{p-1})\hookrightarrow F_*(E)\wedge F_*(E).$$

Moreover, by an $n$-times composition of the above map, we get the desired conclusion:
$$F_*^n(E\wedge E\otimes \Omega_C^{p^n-1})\hookrightarrow F_*^n(E)\wedge F_*^n(E).$$

When $r=1$, by the above argument, we see that the sections in the sub-bundle $F_*(E\otimes V_{p-1})\subset F_*(E\otimes V_{p-2})$ are all symmetric. So we consider the second non-zero sub-bundle $F_*(E\otimes V_{p-2})\subset F_*E\wedge F_*E$ and hence the local basis of the quotient $F_*(E\otimes V_{p-2})/ F_*(E\otimes V_{p-1})\cong  F_*(E\otimes E\otimes \Omega_C^{p-2})$. Suppose that locally $E=\mathcal{O}\cdot e$. As the above argument, the local basis of $F_*(E\otimes E\otimes \Omega_C^{p-2})$ consists of:
\begin{eqnarray*}
     \{(e\otimes e)\cdot t^k\alpha^{p-2}&:=&\sum_{h=0}^{p-2}{p-2\choose h}(-1)^ht^{k+h}e\otimes t^{p-2-h}e\\
     &=&\sum_{h=0}^{p-2}(h+1)t^{k+h}e\otimes t^{p-2-h}e,\, k=0,1,\ldots,p-1\}
\end{eqnarray*}
since ${p-2\choose h}\equiv(-1)^h(h+1) \ \mathrm{ mod}(p)$ by Lemma \ref{2.5}.
Note that
\begin{eqnarray*}
t^k\alpha^{p-2}&=&\sum_{h=0}^{p-2}(h+1)t^{k+h}\otimes t^{p-2-h}=\sum_{i=k}^{p-2+k}(i-k+1)t^{i}\otimes t^{p-2+k-i}\\
&=&\sum_{i=k}^{p-2}(i-k+1)t^{i}\otimes t^{p-2+k-i}+(p-k)t^{p-1}\otimes t^{k-1}+\sum_{i=p}^{p-2+k}(i-k+1)t^{i}\otimes t^{p-2+k-i}\\
&=&\sum_{i=k}^{p-2}(i-k+1)t^{i}\otimes t^{p-2+k-i}+(p-k)t^{p-1}\otimes t^{k-1}+\sum_{j=0}^{k-2}(j+p-k+1)t^{j+p}\otimes t^{k-2-j}\\
&=&\sum_{i=k}^{p-2}(i-k+1)t^{i}\otimes t^{p-2+k-i}+(p-k)t^{p-1}\otimes t^{k-1}+s\sum_{j=0}^{k-2}(j-k+1)t^{j}\otimes t^{k-2-j},
\end{eqnarray*}
which implies that $t^k\alpha^{p-2}$ is symmetric if and only if $k=0$ and $p=2$.
(Here, if $m>n$ and $a<0$, we mean $\Sigma_{i=m}^{n}a_i=0$ and $t^b\otimes t^{a}=0$ respectively.)
So when $p>2$ the symmetric part of $F_*(E\otimes V_{p-2})$ is exactly the sub-bundle $F_*(E\otimes E\otimes \Omega_C^{p-1})$. Therefore, we obtain an injection
$$F_*(E\otimes E\otimes \Omega_C^{p-2})\hookrightarrow F_*(E)\wedge F_*(E).$$
Moreover, by considering $E:=F_* E$ which is of $\rm {rank}>1 $ in (1), we have
$$F_*^n(E\otimes E\otimes \Omega_C^{p^n-2})\hookrightarrow F_*^n(E)\wedge F_*^n(E).$$
When $p=2$, note that $F_*(E)\wedge F_*(E)\cong B_1\otimes E$ and by (1) again we have the required injection:
$$F_*^{n-1}(B_1\otimes E\otimes \Omega_C^{p^{n-1}-1})\hookrightarrow F_*^n(E)\wedge F_*^n(E)$$
\end{proof}

\begin{thm}
Let $C$ be a smooth projective curve of genus $g\geq2$ defined over an algebraically closed field $k$ of characteristic $p>0$, and $E$ be a vector bundle of rank $r$ on $C$ then $F^n_*E\wedge F^n_*E$ is not semi-stable when $r>1$, $p>3$ or $n>1$.
\end{thm}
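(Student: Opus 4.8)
The plan is to show, in each of the three cases of Lemma~\ref{keylem}, that the exhibited subbundle of $F^n_*E \wedge F^n_*E$ has strictly larger slope than $F^n_*E \wedge F^n_*E$ itself, thereby contradicting semi-stability. The entire argument is slope arithmetic built on Proposition~\ref{prop2.2}, the identity $\mu(\wedge^2 W) = 2\mu(W)$ for a vector bundle $W$ (a consequence of $\deg \wedge^2 W = (\mathrm{rk}\,W - 1)\deg W$ and $\mathrm{rk}\,\wedge^2 W = \binom{\mathrm{rk}\,W}{2}$), the rule $\mu(W \otimes L) = \mu(W) + \deg L$ for a line bundle $L$, and $\deg \Omega_C = 2g-2$.

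First I would record the slope of the ambient bundle. Writing $\mu = \mu(E)$, Proposition~\ref{prop2.2} gives
\[
\mu(F^n_*E \wedge F^n_*E) = 2\,\mu(F^n_*E) = \frac{2\mu}{p^n} + 2\left(1 - \frac{1}{p^n}\right)(g-1),
\]
which serves as the benchmark against which each destabilizing subbundle is compared.

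Next I would treat the three cases of Lemma~\ref{keylem} in turn. In case (1), where $r>1$, one has $\mu(E \wedge E) = 2\mu$ and $\deg \Omega_C^{p^n-1} = 2(p^n-1)(g-1)$, so applying Proposition~\ref{prop2.2} to $F^n_*(E \wedge E \otimes \Omega_C^{p^n-1})$ gives slope $\frac{2\mu}{p^n} + 3(1-\frac{1}{p^n})(g-1)$, exceeding the benchmark by $(1-\frac{1}{p^n})(g-1) > 0$ for every $n \geq 1$. In case (2), where $r=1$ and $p>2$, the line-bundle computation $\mu(E \otimes E) = 2\mu$ together with the exponent $p^n-2$ on $\Omega_C$ yields a slope exceeding the benchmark by $(1-\frac{3}{p^n})(g-1)$, positive precisely when $p^n > 3$, that is when $p>3$ or when $p=3$ and $n>1$. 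In case (3), where $r=1$ and $p=2$, the subbundle sits in $F^{n-1}_*$ and involves $B_1$, a line bundle of degree $g-1$; substituting $p^n = 2\,p^{n-1}$ into the benchmark and applying Proposition~\ref{prop2.2} to $F^{n-1}_*(B_1 \otimes E \otimes \Omega_C^{p^{n-1}-1})$ produces a slope exceeding the benchmark by $(1-\frac{1}{p^{n-1}})(g-1)$, positive exactly when $n>1$. Assembling the three cases shows instability whenever $r>1$ (case 1, all $n$), or $p>3$ (case 2, all $n$), or $n>1$ (cases 2 and 3, including $p=3$), which is exactly the asserted range.

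The one step I would check most carefully is the bookkeeping in case (3): the drop from $F^n_*$ to $F^{n-1}_*$ and the degree of $B_1$ must be substituted consistently, using $p^n = 2p^{n-1}$, so that the comparison is genuinely against $\mu(F^n_*E \wedge F^n_*E)$ evaluated at $p=2$. The remaining cases are direct, and I would note in passing that the excluded parameters $r=1$, $n=1$, $p\in\{2,3\}$ are precisely where the slope difference vanishes (for $p=3$) or where $\wedge^2(F_*E)$ is a line bundle and hence automatically semi-stable (for $p=2$), confirming that the hypotheses are sharp.
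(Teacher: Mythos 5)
Your proposal is correct and follows essentially the same route as the paper: in each of the three cases of Lemma \ref{keylem} you compute the slope of the exhibited subbundle via Proposition \ref{prop2.2} and find it exceeds $2\mu(F^n_*E)$ by $(1-\tfrac{1}{p^n})(g-1)$, $(1-\tfrac{3}{p^n})(g-1)$, and $(1-\tfrac{1}{p^{n-1}})(g-1)$ respectively, which matches the paper's arithmetic exactly. Your closing remark on the sharpness of the excluded parameters ($p^n=3$ giving zero slope difference, and $\wedge^2 F_*E$ being a line bundle when $p=2$, $n=1$, $r=1$) is a small but accurate addition not made explicit in the paper.
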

\begin{proof}
When $r>1$, by Lemma \ref{keylem}, we have the following injection
$$F^n_*(E\wedge E\otimes \Omega_C^{p^n-1})\hookrightarrow F_*^n(E)\wedge F_*^n(E).$$
After some computation of the slopes by using Proposition \ref{prop2.2}, we have:
%$$\mu(F_*E\wedge F_*E)=2\mu(F_*E)=\frac{2}{p}\mu(E)+\frac{2(p-1)(g-1)}{p}$$ and
%\begin{eqnarray*}
%\mu(F_*(E\wedge E\otimes \Omega_C^{p-1}))&=&\frac{1}{p}\mu(E\wedge E\otimes \Omega_C^{p-1})+\frac{(p-1)(g-1)}{p}\\
%&=&\frac{1}{p}(2\mu(E)+\mu(\Omega_C^{p-1}))+\frac{(p-1)(g-1)}{p}\\
% &=&\frac{2}{p}\mu(E)+\frac{3(p-1)(g-1)}{p}\\
% &>&\mu(F_*E\wedge F_*E).
%\end{eqnarray*}
%More over, we have
%$$F_*^n(E\wedge E\otimes \Omega_C^{p^n-1})\hookrightarrow F_*^n(E)\wedge F_*^n(E).$$
%After some computation of the slopes, we have:
$$\mu(F_*^nE\wedge F_*^nE)=2\mu(F_*^nE)=\frac{2}{p^n}\mu(E)+\frac{2(p^n-1)(g-1)}{p^n}$$ and
\begin{eqnarray*}
\mu(F_*^n(E\wedge E\otimes \Omega_C^{p^n-1}))&=&\frac{1}{p^n}\mu(E\wedge E\otimes \Omega_C^{p^n-1})+\frac{(p^n-1)(g-1)}{p^n}\\
&=&\frac{1}{p^n}(2\mu(E)+\mu(\Omega_C^{p^n-1}))+\frac{(p^n-1)(g-1)}{p^n}\\
 &=&\frac{2}{p^n}\mu(E)+\frac{3(p^n-1)(g-1)}{p^n}\\
 &>&\mu(F_*^nE\wedge F_*^nE).
\end{eqnarray*}
So it is not semi-stable.

When $r=1$ and $p>2$,
by Lemma \ref{keylem}, we have the following injection
$$F_*^n(E\otimes E\otimes \Omega_C^{p^n-2})\hookrightarrow F^n_*(E)\wedge F^n_*(E).$$
After some computation of the slopes by using Proposition \ref{prop2.2}, we have:
$$\mu(F_*^nE\wedge F_*^nE)=2\mu(F_*^nE)=\frac{2}{p^n}\mu(E)+\frac{2(p^n-1)(g-1)}{p^n}$$ and
\begin{eqnarray*}
\mu(F_*^n(E\otimes E\otimes \Omega_C^{p^n-2}))&=&\frac{1}{p^n}\mu(E\otimes E\otimes \Omega_C^{p^n-2})+\frac{(p^n-1)(g-1)}{p^n}\\
&=&\frac{1}{p^n}(2\mu(E)+\mu(\Omega_C^{p^n-2}))+\frac{(p^n-1)(g-1)}{p^n}\\
 &=&\frac{2}{p^n}\mu(E)+\frac{(3p^n-5)(g-1)}{p^n}\\
 &>&\mu(F^n_*E\wedge F^n_*E),
\end{eqnarray*} when $p>3$ or $p=3$ and $n>1$.
So $F^n_*E\wedge F^n_*E$ is not semi-stable when $p>3$ or $p=3$ and $n>1$.

When $r=1$ and $p=2$,
by Lemma \ref{keylem} we have the following injection
$$F_*^{n-1}(B_1\otimes E\otimes \Omega_C^{p^{n-1}-1})\hookrightarrow F^n_*(E)\wedge F^n_*(E).$$
After some computation of the slopes by using Proposition \ref{prop2.2}, we have:
$$\mu(F_*^nE\wedge F_*^nE)=2\mu(F_*^nE)=\frac{2}{p^n}\mu(E)+\frac{2(p^n-1)(g-1)}{p^n}$$ and
\begin{eqnarray*}
\mu(F_*^{n-1}(B_1\otimes E\otimes \Omega_C^{p^{n-1}-1}))&=&\frac{1}{p^{n-1}}\mu(B_1\otimes E\otimes \Omega_C^{p^{n-1}-1})+\frac{(p^{n-1}-1)(g-1)}{p^{n-1}}\\
&=&\frac{1}{p^{n-1}}(\mu(B_1)+\mu(E)+\mu(\Omega_C^{p^{n-1}-1}))+\frac{(p^{n-1}-1)(g-1)}{p^{n-1}}\\
 &=&\frac{2}{p^n}\mu(E)+\frac{(3p^n-4)(g-1)}{p^n}\\
 &>&\mu(F^n_*E\wedge F^n_*E),
\end{eqnarray*} when $n>1$.
So $F^n_*E\wedge F^n_*E$ is not semi-stable when $n>1$.
\end{proof}
\begin{cor}\label{unstablity}
Let $C$ be a smooth projective curve of genus $g\geq2$ defined over an algebraically closed field $k$ of characteristic $p>0$, then there exists a stable vector bundle $E$ on $C$, whose double exterior product is not semi-stable.
\end{cor}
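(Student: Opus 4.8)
The plan is to exhibit the required bundle as an iterated Frobenius push-forward of a line bundle: stability will then be automatic from Lemma \ref{2.4}, while the failure of semi-stability of the exterior square is delivered verbatim by the Theorem above, applied to a rank-one input.

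First I would fix any line bundle $L$ on $C$, say $L=\mathcal{O}_C$. Having rank one, $L$ is (vacuously) stable. Since $g\geq 2$, Lemma \ref{2.4} shows that $F_*L$ is stable, and because $F^n_*=F_*\circ\cdots\circ F_*$ ($n$ factors) the same lemma applied repeatedly gives that $F^n_*L$ is stable for every $n\geq 1$. In particular $E:=F^2_*L$ is a stable vector bundle on $C$.

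Next I would invoke the Theorem above with input bundle $L$, which has rank $r=1$, and with $n=2$. As $n>1$, the hypothesis of the Theorem is satisfied, so $F^2_*L\wedge F^2_*L=\wedge^2 E$ is not semi-stable; this is precisely the double exterior power of the stable bundle $E$, which proves the corollary. The argument is immediate once the Theorem and Lemma \ref{2.4} are granted, so there is no genuine obstacle; the only point requiring attention is the bookkeeping of which triples $(r,p,n)$ the Theorem actually covers. For a rank-one input the cases $(p,n)=(2,1)$ and $(3,1)$ are excluded, and this is exactly why I pass to $n=2$ rather than working with $F_*L$ directly. When $p>3$ one could equally take $n=1$ and $E=F_*L$, since then the hypothesis $p>3$ already applies to the rank-one input; the choice $n=2$ merely serves to treat all characteristics, including $p=2,3$, uniformly.
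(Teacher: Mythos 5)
Your proof is correct and follows essentially the same route as the paper: take $E=F^n_*L$ for a line bundle $L$ and $n>1$ (the paper leaves $n$ generic with the condition $n>1$; you specialize to $n=2$), deduce stability from Lemma \ref{2.4}, and apply the preceding theorem in the rank-one case. Your extra remark about which triples $(r,p,n)$ the theorem covers, and why $n>1$ is needed to handle $p=2,3$, is a useful clarification that the paper's own two-line proof leaves implicit.
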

\begin{proof}
Take a line bundle $L$ on $C$, then $E:=F^n_*L$ is stable by Lemma \ref{2.4}.
Thus by the above theorem we see that $E\wedge E$ is not semi-stable when $n>1$.
\end{proof}
\begin{prop}\label{costablity}
Let $C$ be a smooth projective curve of genus $g\geq2$ defined over an algebraically closed field $k$ of characteristic $p>0$, then there exists a vector bundle  $E$ which is stable but not cohomologically stable.
\end{prop}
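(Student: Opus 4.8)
The plan is to unwind the definition of cohomological stability into a statement about sub-line-bundles, and then to locate one such sub-line-bundle inside an exterior power of $E=F^n_*L$. Since $H^0(\wedge^t E\otimes A^{-1})\neq 0$ is the same as the existence of a nonzero map $A\to\wedge^t E$, and the saturation of the image of such a map is a sub-line-bundle of $\wedge^t E$ of degree at least $\deg A$, the bundle $E$ fails to be cohomologically stable as soon as one produces an integer $t<\mathrm{rk}(E)$ and a sub-line-bundle $A\hookrightarrow\wedge^t E$ with $\deg A\geq t\mu(E)$. So the whole task is to build, for a suitable stable $E$, a single sub-line-bundle of an exterior power whose degree reaches the threshold value $t\mu(E)$. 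Note that this threshold is exactly the non-strict one appearing in Definition \ref{def}, so hitting it on the nose gives failure of cohomological \emph{stability} but not of cohomological semistability, which is consistent with the limitation announced in the introduction.

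First I would take $E=F^n_*L$, which is stable by Lemma \ref{2.4}, and extract from Lemma \ref{keylem} (and its higher-wedge analogues) a sub-bundle of the shape $F^m_*(N)\hookrightarrow\wedge^t E$ with $N$ a line bundle and $m\leq n$. The reason for working with a push-forward of a line bundle is that its sub-line-bundles are completely controlled by adjunction: $\mathrm{Hom}(A,F^m_*N)=\mathrm{Hom}(A^{\otimes p^m},N)=H^0(N\otimes A^{-p^m})$, so $F^m_*N$ has a sub-line-bundle of degree $d$ precisely when $N\otimes A^{-p^m}$ acquires a section, the maximal such $d$ being $\deg N/p^m$, attained exactly when $N$ is a $p^m$-th power. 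Everything then reduces to a slope computation: I must verify that the line bundle $N$ attached to the chosen exterior power satisfies $\deg N/p^m=t\mu(E)$, and then choose $L$, by solving $N\cong A^{\otimes p^m}$ for $L$ inside the divisible group $\mathrm{Pic}(C)$, so that this maximal sub-line-bundle is actually realized. The resulting $A\hookrightarrow F^m_*N\hookrightarrow\wedge^t E$ then has $\deg A=t\mu(E)$, whence $H^0(\wedge^t E\otimes A^{-1})\neq 0$ and $E$ is stable but not cohomologically stable.

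The cleanest instance of this mechanism is $p=2$: taking $t=2$ and $n\geq 2$, Lemma \ref{keylem}(3) supplies $F^{n-1}_*(B_1\otimes L\otimes\Omega_C^{2^{n-1}-1})\hookrightarrow\wedge^2 E$, where $B_1$ is a line bundle of degree $g-1$, and a direct computation with Proposition \ref{prop2.2} shows $\deg(B_1\otimes L\otimes\Omega_C^{2^{n-1}-1})/2^{n-1}=2\mu(F^n_*L)$, so the threshold is met exactly. The main obstacle is the odd-characteristic case: there the parity bookkeeping of the canonical filtration forces the top anti-symmetric graded piece of $\wedge^2 E$ to lie one power of $\Omega_C$ below the threshold, so the same strategy must be run for a higher wedge $\wedge^t E$ with $t\geq 3$. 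The substantive work is to extend the local antisymmetrization of Lemma \ref{keylem} (the manipulation of the binomial coefficients from Lemma \ref{2.5}) from double to $t$-fold wedges, in order to show that the anti-symmetric push-forward piece $F^n_*(L^{\otimes t}\otimes\Omega_C^{t(p^n-1)/2})$ genuinely appears in $\wedge^t E$; one checks that its associated maximal sub-line-bundle degree is again exactly $t\mu(E)$. This appearance, and hence the argument, requires $p^n$ to be large enough that the threshold exponent $t(p^n-1)/2$ does not exceed the largest available anti-symmetric exponent, which is harmless for an existence statement since $n$ may be chosen as large as we like.
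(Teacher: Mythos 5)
Your reduction of the problem --- a nonzero section of $\wedge^tE\otimes A^{-1}$ is a sub-line-bundle $A\hookrightarrow\wedge^tE$, and sub-line-bundles of a push-forward $F^m_*N$ are governed by adjunction via $H^0(N\otimes A^{-p^m})$ --- is exactly the mechanism of the paper's proof, and your $p=2$ case coincides with the paper's in every detail: the same sub-bundle $F^{n-1}_*(B_1\otimes L\otimes\Omega_C^{2^{n-1}-1})$ from Lemma \ref{keylem}(3), the same exact-threshold degree count, and the same use of divisibility in $\mathrm{Pic}(C)$ to arrange $A^{\otimes 2^{n-1}}\cong B_1\otimes L\otimes\Omega_C^{2^{n-1}-1}$. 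That half is complete.

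The odd-characteristic half is where your argument has a genuine gap. You are right that the sub-bundle supplied by Lemma \ref{keylem}(2) for a line bundle, namely $F^n_*(L^{\otimes 2}\otimes\Omega_C^{p^n-2})$, falls short: its degree is $p^n\cdot 2\mu(E)-2(g-1)$, so by adjunction no sub-line-bundle of it can reach degree $2\mu(E)$. (For what it is worth, the paper's own proof for $p>2$ invokes an injection of $F^n_*(L^{\otimes 2}\otimes\Omega_C^{p^n-1})$ --- exponent $p^n-1$, which is precisely what hits the threshold but is not what Lemma \ref{keylem}(2) provides when $r=1$; so you have put your finger on a real tension in the paper rather than merely chosen a harder route.) However, your proposed repair is only asserted, not proved, and it is the crux of the whole case: the $t$-fold analogue of the local antisymmetrization in Lemma \ref{keylem} is exactly the ``substantive work'' you defer. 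Moreover, even granting that the canonical filtration of $F^*(\wedge^tF_*L)$ has graded pieces $L^{\otimes t}\otimes\Omega_C^{i_1+\cdots+i_t}$ with $0\leq i_1<\cdots<i_t\leq p-1$, the piece with total exponent $t(p-1)/2$ that you want is an intermediate subquotient, not the bottom sub-bundle (whose exponent is $(p-1)+\cdots+(p-t)>t(p-1)/2$ whenever $t<p$); a subquotient does not yield a subsheaf of $\wedge^tE$, so you would have to either work with the genuine bottom piece (and then show it descends to a subsheaf of $\wedge^tF_*L$ itself rather than of its Frobenius pull-back, which is the nontrivial content of Lemma \ref{keylem}) or explain why your chosen piece injects; the case $p=3$, where $t\geq 3$ forces $t\geq p$, needs separate care as well. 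As written, the statement is not proved in odd characteristic.
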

\begin{proof}
Take $E=F^n_*L$ for some line bundle $L$ of degree $d$ and some integer $n>1$, then $E$ is stable by Lemma \ref{2.4}.
By Definition \ref{def}, we must prove that there is an integer $t<rk(E)$ and a  line bundle $A$ of degree $a\geq t\mu(E)$ satisfying $H^0(\wedge^tE_L\otimes A^*)\neq0$
When $p=2$ by Lemma \ref{keylem} we have
$$F^{n-1}_*(B_1\otimes L\otimes\Omega_C^{p^{n-1}-1})\hookrightarrow F_*^nL\wedge F_*^nL.$$
We take the line bundle $L$ of degree $d$ such that $d+(p^n-1)(g-1)$ can be divided by $p^{n-1}$ and then take a line bundle $A$ of degree $\frac{d+(p^n-1)(g-1)}{p^{n-1}}$ such that $A^{ p^{n-1}}\cong B_1\otimes L\otimes\Omega_C^{p^{n-1}-1}$. Thus we have $\deg A\geq 2\mu(F^n_*L)=\frac{d+(p^n-1)(g-1)}{p^{n-1}}$, but $h^0(F^n_*L\wedge F^n_*L\otimes A^{-1})\geq h^0(F^{n-1}_*(B_1\otimes L\otimes\Omega_C^{p^{n-1}-1})\otimes A^{-1})=h^0(F^{n-1}_*(B_1\otimes L\otimes\Omega_C^{p^{n-1}-1}\otimes A^{-p^{n-1}}))=h^0(\mathcal{O}_C)\neq0$, which implies that $F_*^nL$ is not cohomologically stable.

When $p>2$ by Lemma \ref{keylem} we have
$$F^{n}_*(L\otimes L\otimes\Omega_C^{p^{n}-1})\hookrightarrow F_*^nL\wedge F_*^nL.$$
We take the line bundle $L$ of degree $d$ such that $2d+2(p^n-1)(g-1)$ can be divided by $p^{n}$ and then take a line bundle $A$ of degree $\frac{2d+2(p^n-1)(g-1)}{p^{n}}$ such that $A^{ p^{n}}\cong L\otimes L\otimes\Omega_C^{p^{n}-1}$. Thus we have $\deg A\geq 2\mu(F^n_*L)=\frac{2d+2(p^n-1)(g-1)}{p^{n}}$. But $h^0(F^n_*L\wedge F^n_*L\otimes A^{-1})\geq h^0(F^{n}_*(L\otimes L\otimes\Omega_C^{p^{n}-1})\otimes A^{-1})=h^0(F^{n}_*(L\otimes L\otimes\Omega_C^{p^{n}-1}\otimes A^{-p^{n}}))=h^0(\mathcal{O}_C)\neq0$, which implies that $F_*^nL$ is not cohomologically stable.
\end{proof}

\begin{bibdiv}
\begin{biblist}
\bib{G73}{article}{
   author={Gieseker, David},
   title={Stable vector bundles and the Frobenius morphism},
   journal={Ann. Sci. \'Ecole Norm. Sup. (4)},
   volume={6},
   date={1973},
   pages={95--101},
   issn={0012-9593},
   review={\MR{0325616}},
}
\bib{G79}{article}{
   author={Gieseker, D.},
   title={On a theorem of Bogomolov on Chern classes of stable bundles},
   journal={Amer. J. Math.},
   volume={101},
   date={1979},
   number={1},
   pages={77--85},
   issn={0002-9327},
   review={\MR{0527826}},
   doi={10.2307/2373939},
}

\bib{H71}{article}{
   author={Hartshorne, Robin},
   title={Ample vector bundles on curves},
   journal={Nagoya Math. J.},
   volume={43},
   date={1971},
   pages={73--89},
   issn={0027-7630},
   review={\MR{0292847}},
}
\bib{J06}{article}{
   author={Joshi, Kirti},
   author={Ramanan, S.},
   author={Xia, Eugene Z.},
   author={Yu, Jiu-Kang},
   title={On vector bundles destabilized by Frobenius pull-back},
   journal={Compos. Math.},
   volume={142},
   date={2006},
   number={3},
   pages={616--630},
   issn={0010-437X},
   review={\MR{2231194}},
   doi={10.1112/S0010437X05001788},
}
\bib{K70}{article}{
   author={Katz, Nicholas M.},
   title={Nilpotent connections and the monodromy theorem: Applications of a
   result of Turrittin},
   journal={Inst. Hautes \'Etudes Sci. Publ. Math.},
   number={39},
   date={1970},
   pages={175--232},
   issn={0073-8301},
   review={\MR{0291177}},
}
\bib{LP08}{article}{
   author={Lange, Herbert},
   author={Pauly, Christian},
   title={On Frobenius-destabilized rank-2 vector bundles over curves},
   journal={Comment. Math. Helv.},
   volume={83},
   date={2008},
   number={1},
   pages={179--209},
   issn={0010-2571},
   review={\MR{2365412}},
   doi={10.4171/CMH/122},
}

\bib{Sun08}{article}{
   author={Sun, Xiaotao},
   title={Direct images of bundles under Frobenius morphism},
   journal={Invent. Math.},
   volume={173},
   date={2008},
   number={2},
   pages={427--447},
   issn={0020-9910},
   review={\MR{2415312}},
   doi={10.1007/s00222-008-0125-y},
}
\bib{T72}{article}{
   author={Tango, Hiroshi},
   title={On the behavior of extensions of vector bundles under the
   Frobenius map},
   journal={Nagoya Math. J.},
   volume={48},
   date={1972},
   pages={73--89},
   issn={0027-7630},
   review={\MR{0314851}},
}
\end{biblist}
\end{bibdiv}
\end{document}